\font \sevenrm=cmr7
\font \fiverm=cmr5
\newcolumntype{R}[1]{>{\raggedleft\arraybackslash }b{#1}}
\newcolumntype{L}[1]{>{\raggedright\arraybackslash }b{#1}}
\newcolumntype{C}[1]{>{\centering\arraybackslash }b{#1}}
\long\def\ignore#1{}
\newcommand\nc{\newcommand}
\def \restr#1{\mathstrut_{\textstyle |}\raise-6pt\hbox{$\scriptstyle #1$}}
\def \srestr#1{\mathstrut_{\scriptstyle |}\hbox to
-1.5pt{}\raise-4pt\hbox{$\scriptscriptstyle #1$}}
\nc{\wt}{\widetilde} \nc{\wh}{\widehat}
\nc\un{\mathbf{1}}
\nc{\mop}[1]{\mathop{\hbox {\rm #1} }\nolimits}
\nc{\gmop}[1]{\mathop{\hbox {\bf #1} }\nolimits}
\nc{\smop}[1]{\mathop{\hbox {\sevenrm #1} }\nolimits}
\nc{\ssmop}[1]{\mathop{\hbox {\fiverm #1} }\nolimits}
\nc{\mopl}[1]{\mathop{\hbox {\rm #1} }\limits}
\def\dbar{d\hskip-3pt \raise 4pt\hbox{-}}
\nc{\smopl}[1]{\mathop{\hbox {\sevenrm #1} }\limits}
\nc{\ssmopl}[1]{\mathop{\hbox {\fiverm #1} }\limits}
\newtheorem{theorem}{Theorem}
\newtheorem{definition}{Definition}
\newtheorem{lemma}{Lemma}
\newtheorem{remark}{Remark}
\def\nn{\mathbf{n}}
\def\aa{\mathbf{a}}
\def\bb{\mathbf{b}}
\def\bm{\mathbf{m}}
\def\ww{\mathbf{w}}
\def\blambda{\mathbf{\lambda}}
\def\var{\rm Var}
\def\M{\mathscr{M}}
\def\C{\mathbb{C}}
\def\Z{\mathbb{Z}}
\def\N{\mathbb{N}}
\def\di{\displaystyle}
\def\nn{\mathbf{n}}
\def\A{\mathbf{A}}
 \def\oo{\boldsymbol\omega}
\def\o{\omega}
\author{Jacky Cresson}
\address{Université de Pau et des Pays de l'Adour - E2S, Laboratoire de Mathématiques et de leurs Applications, UMR CNRS 5142, Batiment IPRA, avenue de l'Université, 64000 Pau, France}
\author{Jordy Palafox}
\address{CY Tech, CY Cergy Paris Université, 2 Bd Lucien, 64000 Pau, France}
\email{jacky.cresson@univ-pau.fr, jordy.palafox@cyu.fr}
\begin{document}
\title{Variance of vector fields - Definition and properties}

\maketitle

\begin{abstract}

We give a self contained presentation of the notion of variance of a vector field introduced by Jean Ecalle and Bruno Vallet in \cite{ev} following a previous work of Jean Ecalle and Dana Schlomiuk in \cite{es}. We give complete proofs and definitions of various results stated in these articles. Following J. Ecalle and D. Schlomiuk, We illustrate the interest of the variance by giving a complete proof of the formulas for the mould defining the nilpotent part of a resonant vector field. 
\end{abstract}

\setcounter{tocdepth}{3}
\tableofcontents

\section{Introduction}

In \cite{es}, J. Ecalle and D. Schlomiuk study how classical objects attached to a given local vector field (or diffeomorphism) are affected by the action of an infinitesimal automorphism. Although this method plays a fundamental role to obtain informations about the nilpotent part of a vector field, the notion is not even named. This method is completely formalized by J. Ecalle and B. Vallet in \cite{ev} where they introduce the notion of {\it variance} of a vector field (or diffeomorphism). The main idea behind the notion of variance is that "the more direct the geometric meaning of a mould, the simpler its "variance" tends to be" (\cite{ev},p.270). Despite its importance, the definition proposed in (\cite{ev},p.269) is given  by formulas (see \cite{ev}, (3.28)-(3.30)) which are not fully proved in the paper. However, these formulas are fundamental in the proof of the analyticity of the correction and corrected form of a vector field (see \cite{ev}, Problem 2 p.258 and Remark 1 p.271) which is the main result of \cite{ev}. In particular, the formulas obtained for the variance of the correction are mainly used to prove that the Eliasson's phenomenon of compensation of small denominators can be avoided by a purely algebraic elimination of "illusory denominators" (see \cite{ev}, Remark 2 p.257). \\

In this paper, we give a self contained introduction to the notion of variance of a vector field as well as complete proofs of the main formulas. All the computations are done in the framework of the {\it mould formalism} as introduced by J. Ecalle in \cite{ec2} (see also \cite{cr2}). As an application of the variance, we provide complete proofs for the mould equations satisfied by the mould defining the Nilpotent part of a vector field as well as explicit computations. \\

The plan of the paper is organized as follows: In section \ref{infini}, we remind the notions of vector field in prepared form, alphabet associated to a given vector field and infinitesimal automorphisms. In section \ref{universal}, we remind the definition of mould and the property of universality as well as the notion of action of a universal mould. In \ref{secvariance}, we define the variance of a vector field and the associated notion of variance of a mould. We provide a complete proof of the formula for the variance of a mould. 
In section \ref{derivvar}, we define a family of derivations on moulds associated to the variance. We also prove the connection between the classical derivation $\nabla$ and this family of derivations. In \ref{varnilpo}, we shows how the variance of vector fields is applied to compute the mould associated to the nilpotent part of a resonant vector field. Section \ref{conclusion}, gives some perspectives of the present work.

\section{Vector fields and infinitesimal automorphisms}
\label{infini}

 We consider a vector field $X$ of $\C^d$ in {\it prepared} form, i.e. in a coordinates system such that
 \begin{equation}
     X=X_{lin} +\di\sum_{n\in A(X)} B_n ,
 \end{equation}
where the linear part is assumed to be diagonal given by 
\begin{equation}
    X_{lin} =\di\sum_{i=1}^d \lambda_i x_i \di\frac{\partial}{\partial x_i},
\end{equation}
with $\lambda_i \in \C$ for $i=1,\dots ,d$ and the {\it alphabet} $A(X)\subset \Z^d$, $n=(n_1 ,\dots ,n_d )$ is such that all $n_i \in \N$ unless one which can be equal to $-1$ and $B_n$ is a {\it homogeneous} operator of order $1$ and degree $\mid n \mid =n_1 +\dots +n_d$, i.e. satisfying for all monomials $x^m$, $m\in \N^d$ the equality
\begin{equation}
    B_n (x^m ) =\beta_{n,m} x^{n+m} ,
\end{equation}
where $\beta_{n,m} \in \C$. \\

We denote by $\blambda = (\lambda_ 1, \dots ,\lambda_d ) \in \C^d$ and for all $n\in Z^d$, we denote by $\langle \cdot ,\cdot \rangle$ the usual scalar product on $\C^d$. \\

A letter $n\in A(X)$ will be called {\it resonant} if and only if 
\begin{equation}
\langle n,\lambda \rangle =0.
\end{equation}

Let $\theta$ be a vector field. We denote by $\Theta =\exp (\theta )$ the exponential map defined for all vector fields $Y$ by 
\begin{equation}
    \exp (\theta ) (Y) =\di\sum_{r\geq 0} \di\frac{1}{r!} \di ad^r_{\theta} (Y ),
\end{equation}
where $ad_{\theta}$ is defined for all $Y$ by 
\begin{equation}
    ad_{\theta} (Y)=[Y,\theta ] .
\end{equation}
We have $\Theta^{-1} =\exp (-\theta )$.\\

If $Y$ is in the kernel of $ad_{\theta}$, i.e. $[Y,\theta ]=0$ then 
\begin{equation}
    \exp (\theta ) (Y)=Y .
\end{equation}
In particular, a vector field $Y$ is {\it resonant} with respect to $X_{lin}$ if 
\begin{equation}
[Y,X_{lin}] =0 .
\end{equation}

If $\theta$ is a resonant vector field, then we have $\exp (\theta ) (X_{lin} ) =X_{lin}$.\\

Let $B_c$ be an arbitrary homogeneous vector field of degree $c$. We denote by $U_{\epsilon,c}$ the {\it infinitesimal} automorphism of $\C [[ x ]]$ defined by 
\begin{equation}
    U_{\epsilon ,C} =\exp (\epsilon B_c ) ,
\end{equation}
where the exponential is defined by 
\begin{equation}
    \exp (\epsilon B_c )=\di\sum_{r\geq 0} \di\frac{1}{r!} \epsilon^r B_c^r ,
\end{equation}
with $B_c^r =B_c \circ \dots B_c$ the $r$-th composition of the differential operator $B_c$. \\

We denote by $X_{\epsilon}$ the vector field 
\begin{equation}
    X_{\epsilon} =U_{\epsilon ,c } X\, U_{\epsilon ,c}^{-1} .
\end{equation}

We denote by $\Psi_{\epsilon} : g \rightarrow g$ the map
\begin{equation}
    \Psi_{\epsilon} (X)=U_{\epsilon ,c } X\, U_{\epsilon ,c}^{-1} . 
\end{equation}

\section{Universal mould and action of a mould}
\label{universal}

The notion of {\it mould} is introduced by J. Ecalle in his seminal lecture notes "Les fonctions résurgentes" \cite{ec2}. In particular, moulds are used in the study of local vector fields or diffeomorphism in \cite{ec1} and subsequent works. A full introduction to this formalism can be found in \cite{cr2} where a foreword of J. Ecalle explains the general status and main applications of moulds. \\

A {\it mould} on a given alphabet $A$ can be formally defined as a function denoted by $M^{\bullet}$ from $A^*$ to $\C$, where $A^*$ denotes the set of words constructed on $A$, including the empty word denoted by $\emptyset$. We denote by $\mathscr{M} (A)$ the set of moulds on $A$. The set $\mathscr{M} (A)$ possesses a natural structure of linear space. The mould multiplication of two moulds $M^{\bullet}$, $N^{\bullet}$ in $\mathscr{M} (A)$ denoted by $P^{\bullet} =M^{\bullet} \times N^{\bullet}$ is defined for all $\nn \in A^*$ by 
\begin{equation}
    P^{\nn} =\di\sum_{\nn =\aa \bb} M^{\aa} N^{\bb} .
\end{equation}
The set of moulds $(\mathscr{M} (A) , +,\times )$ is then an associative algebra over $\C$.\\

In our application to vector fields $A \subset \Z^d$ and all the moulds we are considering can be seen as maps from $\Z^d$ to $\C$ by imposing $M^{\nn} =0$ for all $\nn \in \Z^d \setminus A$.\\

The definition of the variance of a vector field uses fundamentally that some moulds are 
{\it universal}. This notion is formally discussed in (\cite{cr2},p.374-375). A mould $M^{\bullet} \in \mathscr{M} (A)$, $A\subset \Z^d$, is said to be {\it universal} if there exists a family of complex functions $F_r :\C^r \rightarrow \C$ and a map $\omega : \Z^d \rightarrow \C$ such that for all $\nn \in A^*$, $\nn =n_1 \dots n_r$ we have 
\begin{equation}
    \oo ( \nn )= (\omega (n_1 ),\dots ,\omega (n_r ) ),
\end{equation}
and
\begin{equation}
    M^{\nn} = F_r (\oo (\nn ) ) .
\end{equation}

As an example, the {\it mould of linearization} for vector fields in prepared form (see \cite{cr2},p.375) denoted by $Na^{\bullet}$ is associated to the following family $\mathbf{La}=(La_r)_{r\geq 1}$, $r\in \N$ of complex valued functions defined for all $r\geq 1$ by $La_r : \C^r \rightarrow \C$, 
\begin{equation}
    La_r (x_1 ,\dots ,x_r ) =\di\frac{1}{(x_1 +\dots +x_r ) (x_1 +\dots +x_{r-1} ) \dots x_1} ,
\end{equation}
for all $x=(x_1 ,\dots ,x_r ) \in \C^r \setminus Sa_r$ where 
\begin{equation}
    Sa_r =\{ x_1 =0\} \cup \{ x_1 +x_2 =0\} \cup \dots \{ x_1 +\dots +x_r =0\} .
\end{equation}
For an alphabet $A$ which is non-resonant, i.e. such that for all $\nn \in A^*$, we have $\oo (\nn )=\langle \nn ,\mathbf{\lambda} \rangle \neq 0$, we have for a word $\nn \in A^*$ of length $r$, 
\begin{equation}
    Na^{\nn} =La_r (\oo (\nn ) ) . 
\end{equation}

A universal mould retains its shape although the alphabet is different. Indeed, let $M^{\bullet} \in \mathscr{M} (A)$ be a mould satisfying the universality property. Then, the mould $M^{\bullet}$ keeps a meaning for an arbitrary alphabet $\tilde{A} \subset \Z^d$. For any words $\ww \in \tilde{A}^*$ of length $r$, we have 
\begin{equation}
    M^{\ww} =F_r (\oo (\ww ) ) ,
\end{equation}
which is well defined.\\

This property can be used by introducing the notion of {\it action} induced by a mould.\\

Let $M^{\bullet}$ be a mould satisfying the universality property. For any alphabet $A \subset \Z^d$, the generating function of $M^{\bullet}$ defined on $A$ is given by
\begin{equation}
    \Psi_{M^{\bullet}} (A):=\di\sum_{\nn \in A^*} M^{\nn } \nn .
\end{equation}

As the generating function of a mould $M^{\bullet}$ satisfying the universality property is defined for an arbitrary alphabet $A\subset \Z^d$, we can define an {\it action} of $M^{\bullet}$ on an arbitrary vector field $X$ as follows (see \cite{ev}, p.260).\\

\begin{definition}[Action of a mould on vector fields]
Let $X$ be a vector field which generates the alphabet $A(X)$ and the family of differential operators $\left \{ B_n \right \}_{n\in A(X)}$. The action of $M^{\bullet}$ on $X$ denoted by $Act^{M^{\bullet}} (X)$ is the differential operator 
\begin{equation}
    Act^{M^{\bullet}} (X):=\di\sum_{\nn \in A(X)^*} M^{\nn } B_{\nn} ,
\end{equation}
where for a given word $\nn \in A^*$, $\nn =n_1 \dots n_r$ we have 
\begin{equation}
    B_{\nn} =B_{n_r} \dots B_{n_1} ,
\end{equation}
where the product must be understood as the composition of differential operators.
\end{definition}

Depending on the {\it symmetry} (see \cite{cr2}, p. 346-350) satisfied by the mould, the nature of the image of $X$ under the action of $M^{\bullet}$ is different.\\

In the following, we consider moulds $M^{\bullet}$ which are {\it alternal} (see \cite{cr2}, 4.1. p.331), i.e. such that $M^{\emptyset} =0$ and for all $\aa, \bb \in A^*\setminus \{\emptyset \}$, we have 
\begin{equation}
    \di\sum_{\nn \in sh (\aa ,\bb ) } M^{\nn} =0 ,
\end{equation}
where $sh (\aa ,\bb )$ called the {\it shuffling} of $\aa$ and $\bb$ is the "set of sequences $\nn$ that can be obtained by intermingling the sequences $\aa$ and $\bb$ under preservation of their internal order" (see \cite{ev},p.261).\\

If the mould $M^{\bullet}$ satisfies the universality property and is alternal, then $Act^{M^{\bullet}}$ transforms a vector field in a vector field.

\section{Variance of a universal mould}
\label{secvariance}

We can define the {\it variance} of a universal alternal mould $M^{\bullet}$ as follows:

\begin{definition}[Variance of a vector field]
Let $A_{\epsilon} (X)$ denotes the alphabet generated by $X_{\epsilon}$. The action of $M^{\bullet}$ on $X_{\epsilon}$ can be written as 
\begin{equation}
   X_{M,A_{\epsilon}} :=Act^{M^{\bullet}} (X_{\epsilon} ) =X_{M,A} +\epsilon \var_C (X_{M,A}) +o(\epsilon ^2 ) ,
\end{equation}
where the vector field $\var_c (X_{M,A} )$ defined is called the variance of $X_{M,A}$ under $U_{\epsilon ,B_c}$. 
\end{definition}

We prove the following result which is stated in (\cite{ev}, (3.30) p.269). 

\begin{theorem}[Variance of a universal mould]
    Let $X$ be in prepared form and $A(X)$ the associated alphabet. We denote by $B_c$ a homogeneous vector field of degree at least $2$. Let $A_c =A(X)\cup \{c\}$. We have 
    \begin{equation}
    \var_c (X_{M ,A} )=\di\sum_{\nn \in A_c^*} Var_c (M^{\bullet} )^{\nn}  B_{\nn} ,
\end{equation}
with 
\begin{equation}
Var_c (M^{\bullet} )^{\nn} =\left ( \di\sum_{i=1}^{l(\nn )} \var_{c,i} (M)^{\nn} \right )
\end{equation}
where for $i=1,\dots ,l(\nn )$ we have 
\begin{equation}
    Var_{c,i} (M^{\bullet} )^{\nn} = \left \{ 
    \begin{array}{ll}
    0 & \ \ \ \mbox{\rm if}\ n_i \not = c ,\\
    \omega (c) M^{\nn} + M^{conf_i (\nn )} -M^{conb_i (\nn )} &\ \ \ {\rm if}\ \ n_i =c,
    \end{array}
    \right .
\end{equation}
where the two operators $conf_i$ and $conb_i$ are respectively the {\it forward (resp. backward) contraction of position $i$} defined by
\begin{equation}
    conf_i (\nn ) =\nn^{<i} (n_i +n_{i+1}) \nn^{>i+1}\ \ 
    \mbox{and}
    \ \ 
    conb_i (\nn ) =\nn^{<i-1} (n_{i-1} +n_i) \nn^{>i} ,
\end{equation}
with
\begin{equation}
    \nn^{<i} =n_1 \dots n_{i-1} \ \ \mbox{and}\ \ \ 
    \nn^{>i} =n_{i+1} \dots n_r .
\end{equation}
Moreover, if $c\not\in A(X)$ and $\nn$ contains at least two times the letter $c$ then \begin{equation}
     Var_{c,i} (M^{\bullet} )^{\nn} =0 .
\end{equation}

\end{theorem}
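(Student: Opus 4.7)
The approach is to expand $X_\epsilon = U_\epsilon X U_\epsilon^{-1}$ to first order in $\epsilon$ and extract the coefficient of $\epsilon$ in $Act^{M^\bullet}(X_\epsilon)$. Since $U_\epsilon = 1 + \epsilon B_c + O(\epsilon^2)$, one has $X_\epsilon = X + \epsilon [B_c, X] + O(\epsilon^2)$. Decomposing $X = X_{lin} + \sum_{n \in A(X)} B_n$, the first-order correction splits into a term $[B_c, X_{lin}]$, which a direct computation on monomials $x^m$ shows is a scalar multiple of $B_c$ with coefficient involving $\omega(c)$ (introducing a new letter $c$), together with terms $[B_c, B_n]$ for each $n \in A(X)$, each of which is a first-order differential operator of degree $c + n$ (introducing potentially new letters $c+n$ into the alphabet of $X_\epsilon$). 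The guiding idea is to re-express these contributions in the smaller alphabet $A_c^*$ by means of the operator identity $[B_c, B_n] = B_c B_n - B_n B_c$.

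\textbf{Reduction to $A_c^*$.} Consider a word $\nn' \in A_\epsilon(X)^*$ containing a single extended letter $c + m$ at position $i$, with all other letters lying in $A(X)$. Its first-order contribution to $Act^{M^\bullet}(X_\epsilon)$ is $M^{\nn'}\bigl(B_{n'_r} \cdots [B_c, B_m] \cdots B_{n'_1}\bigr)$, and expanding the commutator produces two length-$(r+1)$ compositions. Under the convention $B_{\nn} = B_{n_r} \cdots B_{n_1}$, these two compositions correspond to two distinct words in $A_c^*$: one with $m$ at position $i$ and $c$ at position $i+1$, which I call $\ww^{(1)}$; the other with $c$ at position $i$ and $m$ at position $i+1$, which I call $\ww^{(2)}$. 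A direct check yields $\nn' = \mathrm{conb}_{i+1}(\ww^{(1)}) = \mathrm{conf}_i(\ww^{(2)})$, producing precisely the two contraction operators of the theorem. The $X_{lin}$-contribution analogously produces a term proportional to $M^{\nn}$ for each occurrence of the letter $c$ in $\nn \in A_c^*$.

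\textbf{Assembly and the vanishing clause.} One then fixes $\nn \in A_c^*$ and gathers, from all first-order sources, the coefficient of $B_{\nn}$ in $Act^{M^\bullet}(X_\epsilon)$. Matching the signs carefully, each position $i$ with $n_i = c$ produces the three contributions $\omega(c) M^{\nn}$, $+M^{\mathrm{conf}_i(\nn)}$ and $-M^{\mathrm{conb}_i(\nn)}$ whose sum is precisely $Var_{c,i}(M^\bullet)^{\nn}$; summing over all positions $i$ with $n_i = c$ yields $Var_c(M^\bullet)^{\nn}$. For the final clause, observe that when $c \notin A(X)$ every occurrence of $c$ in a word of $A_\epsilon(X)^*$ must arise from a first-order correction (either the $[B_c, X_{lin}]$ term or an expansion of some commutator), so that any $\nn$ with $k \geq 2$ copies of $c$ can only appear at order $\epsilon^k$ in $Act^{M^\bullet}(X_\epsilon)$ and hence cannot contribute to the variance, forcing $Var_{c,i}(M^\bullet)^{\nn} = 0$.

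\textbf{Main obstacle.} The main difficulty is the combinatorial bookkeeping: one must carefully track the right-to-left convention for $B_{\nn}$, the signs arising from $[B_c, B_n] = B_c B_n - B_n B_c$, and the matching of the two resulting $A_c^*$-words with the operators $\mathrm{conb}$ and $\mathrm{conf}$. A further subtlety, when $c \in A(X)$, is that a given letter $c$ in $\nn$ may correspond either to the unperturbed $B_c$ (contributing at zeroth order) or to a piece of the first-order correction, and both interpretations must be combined consistently when reading off the coefficient of $B_{\nn}$.
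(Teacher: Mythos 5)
Your proposal is correct and follows essentially the same route as the paper: expand $X_\epsilon$ to first order in $\epsilon$, identify the new homogeneous components $[B_c,X_{lin}]=\omega(c)B_c$ and $[B_c,B_n]$ of the perturbed alphabet, re-express the commutators as compositions $B_{\aa}B_cB_{\bb}$ over $A_c^*$, and match the two resulting words with $conf_i$ and $conb_i$ before summing over the positions carrying the letter $c$. The treatment of the vanishing clause (each occurrence of $c$ costs one power of $\epsilon$ when $c\not\in A(X)$) also coincides with the paper's argument.
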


It must be noted that, due to the universality of the mould $M^{\bullet}$, the variance of the mould $M^{\bullet}$ is by definition not dependant on the alphabet, contrary to the variance of $X_{M,A}$.

\begin{proof}
In order to compute $Var_c (X_{M,A})$ we first obtain the letter of the new alphabet generated by $X_{\epsilon}$. By définition, we have
\begin{equation}
\left .
\begin{array}{lll}
    X_{\epsilon} & = & X +\epsilon [B_c ,X] +o(\epsilon^2 ) ,\\
     & = & X_{lin} +\di\sum_{n\in A} B_n + \epsilon \left ( [B_c ,X_{lin}] +
\di\sum_{n\in A} [B_c ,B_n ] \right ) +o(\epsilon^2 ) .
\end{array}
\right .
\end{equation}
We have that 
\begin{equation}
    [B_c ,X_{lin}]=\omega (c) B_c ,
\end{equation}
and the Lie bracket $[B_c ,B_n ]$ is a homogeneous vector field of order $n+c$. \\

As a consequence, we have four kind of homogeneous operators $B_{\epsilon ,m}$, $m\in A_{\epsilon}$, depending on the properties of $c$: \\

First, we have of course 
\begin{equation}
    B_{\epsilon ,c}=\epsilon \omega (c) B_c .
\end{equation}
We denote by $D(c)$ the set defined by
\begin{equation}
    D(c)=\{ n\in A, \ c-n \in A \} ,
\end{equation}
If $n\in D (c)$, we denote by $B_{\epsilon ,n}$ the vector field 
\begin{equation}
    B_{\epsilon , n} =B_n +\epsilon [B_c ,B_{n-c} ] +o(\epsilon^2 ) .
\end{equation}
If $n\not\in D(c)$, we have  
\begin{equation}
    B_{\epsilon , n} =B_n +o(\epsilon^2 ), 
\end{equation}
and operators of order $m=n+c$ given by 
\begin{equation}
    B_{\epsilon ,m} =\epsilon [B_c ,B_n ] +o(\epsilon^2 ) .
\end{equation}
The other homogeneous vector fields $B_{\epsilon ,m}$ with $m\in A_{\epsilon}$ are of order at least $\epsilon^2$. \\

We have by definition
\begin{equation}
    X_{M,A_{\epsilon}}=\di\sum_{\bm \in A_{\epsilon}} M^{\bm} B_{\epsilon ,\bm} .
\end{equation}

As $Var_c (X_{_{\epsilon}})$ is the part of order one in $\epsilon$ of $X_{M,\epsilon}$,  we can focus on the composition of the four previous operators. We then have   
\begin{equation}
\left .
\begin{array}{lll}
    X_{M,A_{\epsilon}} & = & X_{M,A} \\
    & & +\epsilon 
    \di\sum_{n\in D(c) , \bm \in A^*} \di\sum_{\aa \bb =\bm} M^{\aa n \bb} B_{\aa} [B_c , B_{n-c} ] B_{\bb} \\
    & & +\epsilon \di\sum_{n\not\in D(c) , \bm \in A^*} \di\sum_{\aa \bb =\bm} M^{\aa (n+c)\bb} B_{\aa} [B_c ,B_n ] B_{\bb}  \\
    & & +\epsilon \di\sum_{\bm \in A^*} \di\sum_{\aa \bb =\bm} M^{\aa c\bb} B_{\aa} (\omega (c) B_c ) B_{\bb} \\
    & & +o(\epsilon^2 ) .    
\end{array}
\right .
\end{equation}
Developing the Lie bracket $[B_n ,B_c]$, with $n\in A$, as $B_n B_c -B_c B_n$, we see that we have an expression with operators of the form 
\begin{equation}
    B_{\aa} B_c B_{\bb}
\end{equation}
with $\aa, \bb \in A^*$. As a consequence, we have 
\begin{equation}
    Var_c (X_{M,a} ) =\di\sum_{\nn\in A^*} \di\sum_{i=0}^r  Var_c (M)^{\nn^{\leq i} c\nn^{>i}} B_{\nn^{\leq i} c\nn^{>i}} ,
\end{equation}
which can be rewritten as a sum over $A_c^*$ as follows 
\begin{equation}
    Var_c (X_{M,a} ) =\di\sum_{\nn\in A_c^*} Var_c (M)^{\nn} B_{\nn} ,
\end{equation}

If $\nn =\nn^{< i} c \nn^{>i}$, meaning that the $i$-th letter of $\nn$ is $c$, then we have two sources of the operator $B_{\nn^{< i} c\nn^{>i}}$. The operator $B_{\nn^{< i} c\nn^{>i}}$ can be obtained as 
\begin{equation}
   \omega (c) M^{\nn^{< i} c\nn^{>i}} B_{\nn^{< i}} B_c B_{\nn^{>i}}
\end{equation}
or from a term of the form
\begin{equation}
M^{\aa (n+c) \bb} B_{\aa} [B_c ,B_n ] B_{\bb} ,
\end{equation}
for $n\in A$. Developing the operator $B_{\aa} [B_c ,B_n] B_{\bb}$ as   
\begin{equation}
M^{\aa (n+c) \bb} B_{\aa} B_c B_n B_{\bb} - M^{\aa (n+c) \bb} B_{\aa} B_n B_c  B_{\bb},
\end{equation}
Taking $\aa =\nn^{<i}$ and $n \bb =\nn^{>i}$ we see that we have a term
\begin{equation}
  M^{\nn^{<i} (c+n_{i+1}) \nn^{>i+1}} B_{\nn <i} B_c B_{\nn^{>i}}
\end{equation}
and taking $\aa n =\nn^{<i}$ and $\bb=\nn^{>i}$, we have a term 
\begin{equation}
-  M^{\nn^{<i-1} (n_{i-1} +c) \nn^{>i}} B_{\nn <i} B_c B_{\nn^{>i}}
.
\end{equation}
Let $\nn$ be a word containing only one time the letter $c$. Then the coefficient in front of the operator $B_{\nn}$ is given by 
\begin{equation}
    \omega (c) M^{\nn} +M^{\nn^{<i} (n_i+n_{i+1}) \nn^{>i+1}} -M^{\nn^{<i-1} (n_{i-1} +n_i) n_{>i}}  
     .
\end{equation}
The quantity $Var_{c,i} (M^{\bullet} )$ encodes the previous computations in a unified way.\\

To finish the proof we have two cases:\\

\begin{itemize}
\item If $c\not\in A$ and $\nn$ contains at least two letters $c$, then $Var_c (M)^{\nn} =0$ as well as when $\nn$ does not contains the letter $c$. 

\item If $c\in A$, then for a given word of the form $\nn^1 c \nn^2 c\dots \nn^{k-1} c \nn^k$ we have to sum all the contributions coming from the bracket $B_{\epsilon ,c}$ at the different places where $c$ appears in the sequence, i.e. that $Var_c (M)$ will be given by
\begin{equation}
    Var_c (M)^{\nn} =\di\sum_{i=1}^r Var_{c,i} (M)^{\nn} .
\end{equation}
\end{itemize}
This concludes the proof.
\end{proof}

\section{Derivation associated to the variance}
\label{derivvar}

Following the work of J. Ecalle and D; Schlomiuk (\cite{es}, p.1421, (3.9)), We define the operator $Var_c :\M (A_c^* ) \rightarrow \M (A_c^* )$ by
\begin{equation}
    Var_c (M^{\bullet} )^{\nn} :=\di\sum_{i=1}^{l(\nn )} Var_{c,i} (M^{\bullet} )^{\nn} .
\end{equation}

Let $A$ be an alphabet. An operator $D$ on $\M (A )$ is a {\it derivation} if for all $M^{\bullet},N^{\bullet}$ in $\M (A)$, we have 
\begin{equation}
    D (M^{\bullet} \times N^{\bullet} ) =D(M^{\bullet} ) \times N^{\bullet} +M^{\bullet} \times D(N^{\bullet} ) .
\end{equation}

The following theorem was stated without proof in \cite{es}:

\begin{theorem}
    The operator $Var_c$ is a derivation on $\M (A_c^* )$.
\end{theorem}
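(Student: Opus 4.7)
The plan is to decompose $Var_c$ into elementary operators each of which is a derivation. Writing $r = l(\nn)$ and adopting the convention that $M^{conf_r (\nn)} = M^{conb_1 (\nn)} = 0$ whenever the boundary terms would be ill-defined, the formula from the previous theorem rearranges as
\[
Var_c = \omega(c) \, N_c + F - B,
\]
where $N_c (M^{\bullet})^{\nn} := \#\{i : n_i = c\} \, M^{\nn}$, $F(M^{\bullet})^{\nn} := \sum_{i : n_i = c} M^{conf_i (\nn)}$, and $B(M^{\bullet})^{\nn} := \sum_{i : n_i = c} M^{conb_i (\nn)}$. Since any $\C$-linear combination of derivations is again a derivation, it suffices to prove that $N_c$ and $D := F - B$ are both derivations.

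That $N_c$ is a derivation is immediate from the additivity of the letter count under concatenation: $\#\{i : n_i = c\} = \#\{i : a_i = c\} + \#\{j : b_j = c\}$ for every splitting $\nn = \aa \bb$, and factoring this out of the sum defining the mould product recovers the Leibniz identity.

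For $D = F - B$, the key observation is that $conf_i$ and $conb_i$ are two instances of a single merge operation $\mathrm{mrg}_j(\nn) := n_1 \dots n_{j-1} (n_j + n_{j+1}) n_{j+2} \dots n_r$, namely $conf_i = \mathrm{mrg}_i$ and $conb_i = \mathrm{mrg}_{i-1}$. Reindexing the sum defining $B$ by $j = i - 1$ yields the symmetric form
\[
D(M^{\bullet})^{\nn} = \sum_{j=1}^{r-1} \bigl( \mathbf{1}_{n_j = c} - \mathbf{1}_{n_{j+1} = c} \bigr) M^{\mathrm{mrg}_j (\nn)}.
\]
Expanding $(M^{\bullet} \times N^{\bullet})^{\mathrm{mrg}_j (\nn)}$ by the mould product formula, its splittings are in bijection with those of $\nn$ at positions $s \in \{0, \dots, r\} \setminus \{j\}$: splittings with $s < j$ contribute $M^{\aa_s} N^{\mathrm{mrg}_{j-s}(\bb_s)}$ and those with $s > j$ contribute $M^{\mathrm{mrg}_j (\aa_s)} N^{\bb_s}$, where $\aa_s = n_1 \dots n_s$ and $\bb_s = n_{s+1} \dots n_r$; the split $s = j$ is forbidden since it would separate $n_j$ from $n_{j+1}$. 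Expanding the right-hand side of the Leibniz identity and swapping the order of summation over $j$ and $s$, the contributions from $D(M^{\bullet}) \times N^{\bullet}$ reproduce exactly the $s > j$ part and those from $M^{\bullet} \times D(N^{\bullet})$ reproduce the $s < j$ part, each weighted by the same coefficient $\mathbf{1}_{n_j = c} - \mathbf{1}_{n_{j+1} = c}$.

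The main obstacle is keeping signs and indices straight in this final bookkeeping. The boundary case $s = j$ poses no problem: a merge at position $j$ within $\aa_s$ of length $s$ forces $j \leq s-1$, and a merge at position $j - s$ within $\bb_s$ forces $j \geq s+1$, so the value $s = j$ is absent on the right-hand side of the Leibniz identity just as it is on the left, completing the bijection and the proof.
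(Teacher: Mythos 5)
Your proof is correct, and it takes a genuinely different route from the paper's. The paper verifies the Leibniz identity by direct computation on a word $\nn$ containing a single occurrence of the letter $c$ at position $i$: it expands $(M^{\bullet}\times N^{\bullet})^{conf_i(\nn)}$ and $(M^{\bullet}\times N^{\bullet})^{conb_i(\nn)}$ according to where the cut falls relative to $i$, and matches the pieces against $Var_c(M^{\bullet})\times N^{\bullet}$ and $M^{\bullet}\times Var_c(N^{\bullet})$, using that only the factor containing $c$ has nonzero variance. You instead split $Var_c = \omega(c)N_c + (F-B)$ and prove each summand is a derivation in full generality: $N_c$ because the number of occurrences of $c$ is additive under concatenation, and $F-B$ because the reindexing $conb_i = \mathrm{mrg}_{i-1}$ collapses it into a single signed sum over merge positions, whose compatibility with the mould product reduces to the bijection between splits of $\mathrm{mrg}_j(\nn)$ and splits of $\nn$ away from position $j$. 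What your approach buys is uniformity: it handles words with several occurrences of $c$ (including adjacent ones, where $conf_i(\nn)=conb_{i+1}(\nn)$ and the bookkeeping in a direct computation gets delicate) and the boundary positions $i=1$, $i=r$ without any case analysis, whereas the paper's argument, as written, only treats the single-occurrence case and leaves the general case implicit. The paper's computation, on the other hand, stays closer to the geometric origin of the formula (tracking where the letter $c$ sits in each factor), which is the viewpoint it reuses in the proof of Theorem \ref{nablavart}. Your convention that $M^{conf_r(\nn)}$ and $M^{conb_1(\nn)}$ vanish is the right one: it is exactly what the derivation of the variance formula forces, since the forward (resp.\ backward) contraction term arises only when $\nn^{>i}$ (resp.\ $\nn^{<i}$) is nonempty.
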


\begin{proof}
    We have to compute for two arbitrary moulds $M^{\bullet}$ and $N^{\bullet}$ of $\M (A_c^* )$ the quantity $Var_c (M^{\bullet} \times N^{\bullet} )$. We have 
    \begin{equation}
        Var_c (M^{\bullet} \times N^{\bullet})^{\nn} :=\di\sum_{i=1}^{l(\nn )} Var_{c,i} (M^{\bullet} \times N^{\bullet})^{\nn} .
    \end{equation}
Let $\nn$ be such that $n_i =c$ and $n_j\not =c$ for $j\not= i$ then
    \begin{equation}
        Var_{c,j} (M^{\bullet} \times N^{\bullet})^{\nn} =0 
    \end{equation}
    for $j\not=i$ and
    \begin{equation}
        Var_{c,i} (M^{\bullet} \times N^{\bullet})^{\nn} =\omega (c) (M^{\bullet} \times N^{\bullet} )^{\nn} + (M^{\bullet} \times N^{\bullet} )^{conf_i (\nn )} -(M^{\bullet} \times N^{\bullet} )^{conb_i (\nn )}
    \end{equation}
    As a consequence, we obtain
    \begin{equation}
    \label{var0}
        Var_c (M^{\bullet} \times N^{\bullet})^{\nn} =\omega (c) (M^{\bullet} \times N^{\bullet} )^{\nn} + (M^{\bullet} \times N^{\bullet} )^{conf_i (\nn )} -(M^{\bullet} \times N^{\bullet} )^{conb_i (\nn )} .
    \end{equation}
Denoting $r=l (\nn )$ the lenght of $\nn$, We have    
\begin{equation}
    (M^{\bullet} \times N^{\bullet} )^{conf_i (\nn )} =\di\sum_{\aa \bb = conf_i (\nn ) } M^{\aa} N^{\aa} =\sum_{j=1}^{i-1} M^{\nn^{<j}} N^{conf_{i-j+1} (\nn^{\geq j} )} +
    \sum_{j=i}^{r} M^{conf_i (\nn^{<j})} N^{\nn^{\geq j}}
\end{equation}
and moreover, we obtain 
\begin{equation}
\left .
\begin{array}{lll}
\left (     Var_c (M^{\bullet})\times N^{\bullet} \right ) ^{\nn} & = & 
\di\sum_{j=1}^r \di Var_c (M^{\bullet})^{\nn <j} N^{\nn^{\geq j}} ,\\
 & = & \di\sum_{j=1}^{i-1} Var_c (M^{\bullet})^{\nn <j} N^{\nn^{\geq j}} 
 + \di\sum_{j=i}^{r} Var_c (M^{\bullet})^{\nn <j} N^{\nn^{\geq j}} .\\
\end{array}
\right .
\end{equation}
As $\nn^{<j}$ does not contain the letter $c$ for $j\leq i-1$, we have $Var_C (M^{\bullet})^{\nn^{<j}} =0$, and we obtain
\begin{equation}
\left (     Var_c (M^{\bullet})\times N^{\bullet} \right ) ^{\nn} = \di\sum_{j=i}^{r} Var_c (M^{\bullet})^{\nn^{<j}} N^{\nn^{\geq j}} .
\end{equation}
As we have only one letter $c$ at the $i$-th place in $n^{<j}$ for $j\geq i$, we obtain
\begin{equation}
    Var_{c,k} (M^{\bullet} )^{\nn^{<j}} =0 \ \mbox{\rm for}\  k=1,\dots , j-1,\ \ j\not= i ,  
\end{equation}
and
\begin{equation}
Var_{c,i} (M^{\bullet} )^{\nn^{<j}} = \omega (c) M^{\nn^{<j}} + M^{conf_i (\nn^{<j} )} -M^{conb_i (\nn^{<j} )} .
\end{equation}
As a consequence, we obtain for $j\geq i$ the equality
\begin{equation}
    Var_c (M^{\bullet})^{\nn^{<j}} =\omega (c) M^{\nn^{<j}} + M^{conf_i (\nn^{<j} )} -M^{conb_i (\nn^{<j} )} . 
\end{equation}
Replacing $Var_c (M^{\bullet})^{\nn^{<j}}$ by its expression, we obtain
\begin{equation}
\label{var1}
\left .
\begin{array}{lll}
\left (     Var_c (M^{\bullet})\times N^{\bullet} \right ) ^{\nn} & = & \di\sum_{j=i}^{r}  \left (  \omega (c) M^{\nn^{<j}} + M^{conf_i (\nn^{<j} )} -M^{conb_i (\nn^{<j} )} \right )  N^{\nn^{\geq j}} ,\\
 & =  & \omega (c) \di\sum_{j=i}^r M^{\nn^{<j}} N^{\nn^{\geq j}}
 + \di\sum_{j=i}^{r}  \left ( M^{conf_i (\nn^{<j} )} -M^{conb_i (\nn^{<j} )} \right )  N^{\nn^{\geq j}} .
\end{array}
\right .
\end{equation}
In the same way, we can compute $\left ( M^{\bullet} \times Var_c (N^{\bullet} ) \right ) ^{\nn}$. Indeed, we have 
\begin{equation}
\left (     M^{\bullet}\times Var_c (N^{\bullet} ) \right ) ^{\nn} = \di\sum_{j=i}^{r} M^{\bullet})^{\nn^{<j}} Var_c (N^{\bullet} ){\nn^{\geq j}} .
\end{equation}
As we have only one letter $c$ at the $i-j+1$-th place in $n^{\geq j}$ for $j\leq i$, we obtain
\begin{equation}
    Var_{c,k} (N^{\bullet} )^{\nn^{\geq j}} =0 \ \mbox{\rm for}\  k=1,\dots , r-j+1,\ \ j\not= i-j+1 ,  
\end{equation}
and
\begin{equation}
Var_{c,i-j+1} (M^{\bullet} )^{\nn^{\geq j}} = \omega (c) N^{\nn^{<j}} + N^{conf_{i-j+1} (\nn^{<j} )} -N^{conb_{i-j+1} (\nn^{<j} )} .
\end{equation}
As a consequence, we obtain for $j\leq i$ the equality
\begin{equation}
    Var_c (N^{\bullet} ) ^{\nn^{\geq j }} = \omega (c) N^{\nn^{\geq j}} + N^{conf_{i-j+1} (\nn^{\geq j} )} -N^{conb_{i-j+1} (\nn^{\geq j} )} . 
\end{equation}
Replacing $Var_c (N^{\bullet})^{\nn^{\geq j}}$ by its expression, we obtain
\begin{equation}
\label{var2}
\left .
\begin{array}{lll}
\left (     M^{\bullet}\times Var_c (N^{\bullet} ) \right ) ^{\nn} & = &  \di\sum_{j=1}^{i} M^{\nn^{<j}} \left ( \omega (c) N^{\nn^{\geq j}} + N^{conf_{i-j+1} (\nn^{\geq j} )} -N^{conb_{i-j+1} (\nn^{\geq j} )} \right ) ,\\
 & = & \omega (c) \di\sum_{j=1}^{i} M^{\nn^{<j}} N^{\nn^{\geq j}}\\
 &    & 
 +\di\sum_{j=1}^{i} M^{\nn^{<j}} M^{\nn^{<j}} \left ( N^{conf_{i-j+1} (\nn^{\geq j} )} -N^{conb_{i-j+1} (\nn^{\geq j} )} \right ) .
\end{array}
\right .
\end{equation}
Regrouping \eqref{var1} and \eqref{var2}, we have 
\begin{equation}
    \left (  Var_c (M^{\bullet})\times N^{\bullet} + 
    M^{\bullet}\times Var_c (N^{\bullet} ) \right ) ^{\nn} =
    \omega (c) \left ( M^{\bullet} N^{\bullet} \right )^{\nn} + (M^{\bullet} \times N^{\bullet} )^{conf_i (\nn )} - 
    (M^{\bullet} \times N^{\bullet} )^{conb_i (\nn )}
\end{equation}
which gives using \eqref{var0}
\begin{equation}
\left (  Var_c (M^{\bullet})\times N^{\bullet} + 
    M^{\bullet}\times Var_c (N^{\bullet} ) \right ) ^{\nn} =   Var_c (M^{\bullet} \times N^{\bullet})^{\nn} 
\end{equation}
This concludes the proof.
\end{proof}

In (\cite{es},p.1421), J. Ecalle and D. Schlomiuk state an equality between two operators. Namely, they introduce the operator denoted $\nabla$ and defined by
\begin{equation}
    \nabla (M^{\bullet} )^{\nn} = \omega (\nn ) M^{\bullet} . 
\end{equation}
This operator plays a central role in the linearization problem of vector fields (see \cite{ec1,cr2} for more details). \\

The following result is states without proof in (\cite{es},p.1421):

\begin{theorem}
\label{nablavart}
We have the equality
\begin{equation}
\label{nablavar}
    \nabla :=\di\sum_{a \in A} Var_{a} .
\end{equation}
\end{theorem}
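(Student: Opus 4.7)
The plan is to unpack $\sum_{a \in A} Var_a(M^\bullet)$ at an arbitrary word $\nn = n_1 \cdots n_r$ and reduce the claim to a telescoping identity. By definition,
\begin{equation*}
\sum_{a \in A} Var_a(M^\bullet)^{\nn} = \sum_{a \in A} \sum_{i=1}^{r} Var_{a,i}(M^\bullet)^{\nn},
\end{equation*}
and since $Var_{a,i}(M^\bullet)^{\nn}$ vanishes whenever $n_i \neq a$, the only nonzero contribution at position $i$ comes from the single choice $a = n_i$. Substituting the explicit formula from the variance theorem would then give
\begin{equation*}
\sum_{a \in A} Var_a(M^\bullet)^{\nn} = \sum_{i=1}^{r} \omega(n_i)\, M^{\nn} + \sum_{i=1}^{r} \left( M^{conf_i(\nn)} - M^{conb_i(\nn)} \right).
\end{equation*}
The first sum collapses to $\omega(\nn)\, M^{\nn} = \nabla(M^\bullet)^{\nn}$, which is exactly the value of $\nabla(M^\bullet)$ on $\nn$ demanded by \eqref{nablavar}.

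What remains is to show that the second sum vanishes. The key observation is that $conb_i$ and $conf_{i-1}$ both replace the adjacent pair $(n_{i-1}, n_i)$ by the single letter $n_{i-1} + n_i$, so $conb_i(\nn) = conf_{i-1}(\nn)$ for all $2 \leq i \leq r$. Applying the index shift $j = i-1$ to the $conb$-sum would match it term-by-term against the $conf$-sum, and the alleged cancellation reduces to the vanishing of the two unmatched boundary summands $M^{conf_r(\nn)}$ and $M^{conb_1(\nn)}$.

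The one step that requires care --- and the main potential obstacle --- is pinning down the boundary convention. Retracing the proof of the variance theorem, the term $M^{conf_i(\nn)}$ arose from the choice $\aa = \nn^{<i}$, $n\bb = \nn^{>i}$, which only makes sense for $i < r$, while $M^{conb_i(\nn)}$ came from $\aa n = \nn^{<i}$, $\bb = \nn^{>i}$, which requires $i > 1$. Thus the consistent convention built into the variance theorem is $M^{conf_r(\nn)} = M^{conb_1(\nn)} = 0$, and with this reading the telescoping is exact, yielding the claimed operator identity.
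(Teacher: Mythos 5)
Your proposal is correct and follows essentially the same route as the paper: isolate the single contributing letter $a=n_i$ at each position, collapse the $\omega(n_i)$ terms to $\omega(\nn)M^{\nn}$, and cancel the forward and backward contractions via the index shift $conb_i(\nn)=conf_{i-1}(\nn)$. Your explicit justification of the boundary convention $M^{conf_r(\nn)}=M^{conb_1(\nn)}=0$ by retracing the proof of the variance theorem is in fact slightly more careful than the paper, which only remarks that ``the extremal terms are compensating.''
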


The operator $\nabla$ is known to be a derivation on $\M (A)$ (see for example \cite{cr2}) directly by computations. An alternative proof follows from the formula \eqref{nablavar} and the fact that $Var_a$ is a derivation for all $a\in A$. 

\begin{proof}
Let $\nn \in A^*$ be given, $\nn =n_1 \dots n_r$. For all $a\not= n_i$, $i=1\dots ,r$, we have $Var_a (M^{\bullet} )^{\nn} =0$. Taking into account that a letter can appear many times in the word $\nn$, We then have 
\begin{equation}
    \di\sum_{a\in A} Var_a (M^{\bullet} )^{\nn} =\di\sum_{j=1}^r Var_{j, n_j} (M^{\bullet} )^{\nn} .
\end{equation}
We than have 
\begin{equation}
\left .
\begin{array}{lll}
    \di\sum_{a\in A} Var_a (M^{\bullet} )^{\nn} & = &  
    \di\sum_{j=1}^r \omega (n_j ) M^{\nn} + M^{\nn^{<r-1} (n_{r-1} +n_r)}\\ 
& &     +\di\sum_{j=1}^{r-1} M^{\nn^{<j} (n_j +n_{j+1} ) \nn^{>j+1}} -
    \di\sum_{j=2}^{r} M^{\nn^{<j-1} (n_{j-1} +n_j ) \nn^{<j}}\\
& &    - M^{\nn^{<r-1} (n_{r-1}+n_r)}.
\end{array}
\right .
\end{equation}
The extremal terms are compensating as well as the two sums on the second line. We then obtain 
\begin{equation}
\di\sum_{a\in A} Var_a (M^{\bullet} )^{\nn}     =  \omega (\nn ) M^{\nn} .
\end{equation}
We then conclude that for all $\nn \in A^*$, we have 
\begin{equation}
\di\sum_{a\in A} Var_a (M^{\bullet} )^{\nn} =\nabla (M^{\bullet} )^{\nn} .    
\end{equation}
This concludes the proof.
\end{proof}

\section{Variance and the Nilpotent part of resonant vector fields}
\label{varnilpo}

\subsection{The Nilpotent part of a resonant vector field}

Let $X$ be in prepared form. There exists a decomposition of $X$ as 
\begin{equation}
\label{decomp}
    X=X_{dia} +X_{nil},
\end{equation}
where $X_{dia}$ and $X_{nil}$ are formal vector fields and 
\begin{equation}
    [X_{dia},X_{nil}]=0 ,
\end{equation}
into a {\it diagonalizable} part $X_{dia}$ and a nilpotent part $X_{nil}$. The diagonalizable part $X_{dia}$ is formally linearizable and $X_{nil}$ has no linear component. The decomposition is {\it chart invariant}, i.e. that for any substitution operator $\Theta$ we have 
\begin{equation}
    (\Theta X\Theta^{-1} )_{dia} = \Theta X_{dia} \Theta^{-1} \ \ \mbox{\rm and}\ \ (\Theta X\Theta^{-1} )_{nil} = \Theta X_{nil} \Theta^{-1} .
\end{equation}

In \cite{es}, J. Ecalle and D. Schlomiuk prove that the nilpotent and diagonalizable part have a mould expension of the form 
\begin{equation}
    X_{dia} = \di\sum_{\nn \in A^*} Dia^{\nn} B_{\nn}\ \ \mbox{and}\ \ 
    X_{nil} = \di\sum_{\nn \in A^*} Nil^{\nn} B_{\nn} ,
\end{equation}
where the mould $Dia^{\bullet}$ and $Nil^{\bullet}$ have to be computed. \\

The decomposition \eqref{decomp} implies that
\begin{equation}
    I^{\bullet} =Dia^{\bullet} +Nil^{\bullet} .
\end{equation}

In (\cite{es}, p.1422), they state without proof the following result for which some arguments are given in (\cite{es},p.1424):

\begin{theorem}\label{thm_var_nil}
    The mould $Nil^{\bullet}$ satisfies the functional equation
    \begin{equation}
    \label{varcnil}
        Var_c (Nil^{\bullet} ) =I_c^{\bullet} \times Nil^{\bullet} -Nil^{\bullet} \times I_c^{\bullet} 
    \end{equation}
    with $Nil^{\emptyset} =0$ and $Nil^{n} =1$ if $n\in Res (A)$.\\

    We have also
    \begin{equation}
        \nabla (Nil^{\bullet} ) = I^{\bullet} \times Nil^{\bullet} -Nil^{\bullet}\times I^{\bullet} .
    \end{equation}
\end{theorem}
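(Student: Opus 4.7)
The plan is to exploit the chart invariance of the Ecalle--Schlomiuk decomposition, $(\Theta X \Theta^{-1})_{nil} = \Theta X_{nil} \Theta^{-1}$, applied to $\Theta = U_{\epsilon,c}$. This gives
\begin{equation*}
(X_\epsilon)_{nil} = U_{\epsilon,c}\, X_{nil}\, U_{\epsilon,c}^{-1} = X_{nil} + \epsilon\, [B_c, X_{nil}] + O(\epsilon^2),
\end{equation*}
so the variance of $X_{nil}$ under $U_{\epsilon,c}$ coincides with the bracket $[B_c, X_{nil}]$. On the other hand, since $Nil^\bullet$ is universal and alternal, applying the Variance of a universal mould theorem to $M^\bullet = Nil^\bullet$ also gives $\var_c(X_{nil}) = \sum_{\bm \in A_c^*} Var_c(Nil^\bullet)^{\bm} B_{\bm}$, so the proof reduces to matching these two expressions at the mould level.

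To carry out the matching, I would expand $[B_c, X_{nil}]$ from $X_{nil} = \sum_\nn Nil^\nn B_\nn$ using the composition identities $B_c B_\nn = B_{\nn c}$ and $B_\nn B_c = B_{c\nn}$, which follow from the reverse-order convention $B_\nn = B_{n_r}\cdots B_{n_1}$. After relabeling the sums, the coefficient of $B_{\bm}$ for each $\bm \in A_c^*$ containing the letter $c$ is built from $Nil^{\bm^{>1}}$ when $\bm$ begins with $c$ and $Nil^{\bm^{<l(\bm)}}$ when $\bm$ ends with $c$, combining into $(I_c^\bullet \times Nil^\bullet - Nil^\bullet \times I_c^\bullet)^{\bm}$, where $I_c^\bullet$ denotes the mould equal to $1$ on the single letter $c$ and $0$ elsewhere. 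Using universality of $Nil^\bullet$ to pass from this operator identity to an identity of moulds yields the functional equation \eqref{varcnil}. The initial conditions $Nil^{\emptyset} = 0$ and $Nil^n = 1$ for $n \in Res(A)$ are read off directly from the Ecalle--Schlomiuk decomposition: $X_{nil}$ has no identity-operator contribution and collects exactly the resonant single-letter operators of $X$, since the non-resonant ones are absorbed into the formally linearizable $X_{dia}$.

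For the $\nabla$ equation, I would sum the functional equation over $c \in A$ and invoke Theorem \ref{nablavart}:
\begin{equation*}
\nabla(Nil^\bullet) = \sum_{c \in A} Var_c(Nil^\bullet) = \sum_{c \in A} \bigl(I_c^\bullet \times Nil^\bullet - Nil^\bullet \times I_c^\bullet\bigr) = I^\bullet \times Nil^\bullet - Nil^\bullet \times I^\bullet,
\end{equation*}
using $\sum_{c \in A} I_c^\bullet = I^\bullet$ at the mould level. The main obstacle I anticipate is the careful bookkeeping when $c \in A$: words $\bm$ may then contain several occurrences of $c$, and the contributions from each position of $c$ inside $B_{\bm}$ must be accumulated, paralleling the position-by-position decomposition $Var_c = \sum_i Var_{c,i}$ of the Variance theorem. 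A secondary subtlety lies in the passage from an operator identity to a mould identity, which rests on universality and amounts to treating the $B_{\bm}$ as formally independent on a sufficiently generic alphabet.
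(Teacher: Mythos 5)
Your proposal follows essentially the same route as the paper: chart invariance of the nilpotent part under $U_{\epsilon,c}$ to get $\var_c(X_{nil})=[B_c,X_{nil}]$, expansion of the commutator in the mould formalism to identify its mould as $I_c^{\bullet}\times Nil^{\bullet}-Nil^{\bullet}\times I_c^{\bullet}$, the initial conditions read off from the decomposition $X=X_{dia}+X_{nil}$, and the $\nabla$ equation obtained by summing over $c\in A$ via Theorem~\ref{nablavart}. The only point to watch is the sign bookkeeping in passing from $B_cB_{\nn}=B_{\nn c}$ to the ordering of the mould product (your stated convention would naively yield $Nil^{\bullet}\times I_c^{\bullet}-I_c^{\bullet}\times Nil^{\bullet}$), but this is the same convention-dependent step the paper itself glosses over, and it does not change the substance of the argument.
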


\begin{proof}
    Using the same notations as in the previous section, we have 
    \begin{equation}
        (X_{nil} )_{\epsilon} =X_{nil} +\epsilon [B_c ,X_{nil}] +o(\epsilon ) ,
    \end{equation}
    and
    \begin{equation}
        (X_{\epsilon} )_{nil} = X_{nil} +\epsilon Var_c (X_{nil} ) +o(\epsilon ) .
    \end{equation}
As the nilpotent (as well as the diagonalizable) part is chart invariant, we have 
\begin{equation}
    (X_{nil} )_{\epsilon} = (X_{\epsilon} )_{nil} .
\end{equation}
As a consequence, we obtain
\begin{equation}
    Var_c (X_{nil} ) = [B_c ,X_{nil}] .
\end{equation}
The Lie bracket of $B_c$ and $X_{nil}$ can be written using the mould formalism over $A_c^*$. Indeed, we have 
\begin{equation}
    [B_c , X_{nil} ] =B_c X_{nil} -X_{nil} B_c .
\end{equation}
As $X_{nil} =\di\sum_{\nn \in A^*} Nil^{\nn} B_{nn}$, we obtain 
\begin{equation}
    [B_c ,X_{nil} ]= \di\sum_{\nn \in A^*} Nil^{\nn} B_c B_{\nn} - \di\sum_{\nn \in A^*} Nil^{\nn} B_{\nn} B_c .
\end{equation}
Let us denote by $M^{\bullet}$ the mould defined for all $\ww \in A_c^*$ by 
\begin{equation}
    M^{\ww} =\left \{ 
    \begin{array}{lll}
    Nil^{\nn} & \ \ & \mbox{if}\ \ww = c\nn,\ \nn \in A^* ,\\
    -Nil^{\nn} & & \mbox{if}\ \ww = \nn c,\ \nn \in A^* ,\\
    0 & & \mbox{otherwise}
    \end{array}
    \right .
\end{equation}
Let $I_c^{\bullet}$ be the mould defined on $A_c^*$ by 
\begin{equation}
    I_c^{\ww} =\left \{ 
    \begin{array}{lll}
    1 & \ \ & \mbox{if}\ \ww = c,\\
    0 & & \mbox{otherwise} .
    \end{array}
    \right .
\end{equation}
Then we have the following equality
\begin{equation}
    M^{\bullet} =I_c \times Nil^{\bullet} -Nil^{\bullet} \times I_c^{\bullet} .
\end{equation}
We deduce that 
\begin{equation}
    Var_c (X_{nil} ) =\di\sum_{\ww \in A_c^*} \left ( I_c \times Nil^{\bullet} -Nil^{\bullet} \times I_c^{\bullet} \right )^{\ww} B_{\ww} .
\end{equation}
As 
\begin{equation}
    Var_c (X_{nil} ) =\di\sum_{\ww \in A_c^*} Var_c (Nil^{\bullet} )^{\ww} B_{\ww} ,
\end{equation}
we finally obtain
\begin{equation}
    Var_c (Nil^{\bullet} )^{\bullet} = I_c \times Nil^{\bullet} -Nil^{\bullet} \times I_c^{\bullet} .
\end{equation}
This concludes the proof of the first formula. The second one using the derivation $\nabla$ follows from theorem \ref{nablavart}. \\

For the initial conditions on the mould $Nil^{\bullet}$, as $X_{nil}$ commutes with $X_{dia}$ it contains only resonant terms and as $X=X_{dia} +X_{nil}$ we have to impose 
\begin{equation}
    Nil^{\emptyset} =0\ \ \mbox{and}\ \ Nil^{n} =1\ \mbox{if}\ n\in Res (A)\ \mbox{and}\ 0\ \mbox{otherwise.} 
\end{equation}
\end{proof}

We can directly check that the mould $Dia^{\bullet}$ has to satisfy the equation
    \begin{equation}
        Var_c (Dia^{\bullet} ) =I_c^{\bullet} \times Dia^{\bullet} -Dia^{\bullet} \times I_c^{\bullet} 
    \end{equation}
but with different initial conditions. Indeed, we have 
    \begin{equation}
        Var_c (Dia^{\bullet} ) =Var_c (I^{\bullet} ) -Var_c (Nil^{\bullet} ) 
    \end{equation}
by linearity of the derivation $Var_c$. Then we obtain
\begin{equation}
\left .
\begin{array}{lll}
        Var_c (Dia^{\bullet} ) & = & Var_c (I^{\bullet} )- I_c^{\bullet} \times Nil^{\bullet} +Nil^{\bullet} \times I_c^{\bullet} ,\\
         & = & Var_c (I^{\bullet} ) - I_c^{\bullet} \times I^{\bullet} + I_c^{\bullet} \times Dia^{\bullet} +I^{\bullet} \times I_c^{\bullet} - Dia^{\bullet} \times  I_c^{\bullet} .
\end{array}
\right .
\end{equation}
We have 
\begin{equation}
    Var_c (I^{\bullet} )^{\nn} =\left \{ 
    \begin{array}{l}
    \omega (c)  \ \mbox{if}\ \nn =c ,\\
    1 \ \mbox{if}\ \nn =cm ,\\
    -1 \ \mbox{if}\ \nn = mc ,\\
    0\ \mbox{otherwise.}
    \end{array}
    \right .
\end{equation}
and 
\begin{equation}
    (I_c^{\bullet} \times I^{\bullet} )^{\nn}=
    \left .
    \begin{array}{l}
    1\ \mbox{if}\ \nn =cm ,\\
    0 \ \mbox{otherwise.}
    \end{array}
    \right .
\ \     
    (I^{\bullet} \times I_c^{\bullet})^{\nn} =
    \left .
    \begin{array}{l}
    1\ \mbox{if}\ \nn =mc ,\\
    0 \ \mbox{otherwise.}
    \end{array}
    \right .
\end{equation}
If $c$ is such that $\omega (c)=0$ then $Var_c (I^{\bullet} )=0$ then we obtain for all $\nn =A_c$ that 
\begin{equation}
    Var_c (I^{\bullet} ) - I_c^{\bullet} \times I^{\bullet} +I^{\bullet} \times I_c^{\bullet}
    = 0 .
\end{equation}
As a consequence, for all $c\in Res (A)$, we obtain 
\begin{equation}
    Var_c (Dia^{\bullet} )=I_c \times Dia^{\bullet} -Dia^{\bullet}\times I_c^{\bullet} . 
\end{equation}

\subsection{Explicit computation of the mould $Nil^{\bullet}$}

The mould $Nil^{\bullet}$ is not easy to compute even if we have the mould equation \eqref{varcnil}. Let us compute it for sequences of length $\leq 4$ for which J. Ecalle and D. Schlomiuk provide a table (see \cite{es},p.1481-1482) but without the details of the computations. However, these computations are interesting by itself as they use the variance rules as a key ingredient. In this Section, we give explicit proof for all the formula.\\

As remarked by J. Ecalle and B. Vallet in (\cite{ev},p.271), the variance provides an "{\it overdetermined} induction" for the computation of a mould as it can be applied for different letters of the same word. \\

Indeed, for a word $\nn =n_1 \dots n_r$, we can take $c=n_1$ in the variance formula, so that if $n_i\not= n_1$ for $i=2,\dots ,r$, we obtain
\begin{equation}
\label{cal1}
    \omega (n_1) Nil^{n_1 \dots n_r} +Nil^{(n_1 +n_2) n_3 \dots n_r} = Nil^{n_2 \dots n_r} .
\end{equation}

The same computation can be performed with $c=n_r$ and $n_i\not= n_r$ for $i=1,\dots ,r-1$, and we obtain 
\begin{equation}
\label{calr}
\omega (n_r) Nil^{n_1 \dots n_r} -Nil^{n_1 \dots n_{r-2} (n_{r-1} +n_r)} = -Nil^{n_1 \dots n_{r-1}} .
\end{equation}
For $i\not= \{ 1,r \}$, $c=n_i$ and $n_j\not =n_i$, $j=1,\dots ,r$, $j\not= i$, we obtain 
\begin{equation}
\label{cali}
\omega (n_i) Nil^{n_1 \dots n_r} -Nil^{n_1 \dots (n_{i-1} +n_i ) \dots n_r} +Nil^{n_1 \dots (n_i +n_{i+1}) \dots n_r} = 0 .
\end{equation}

We have for all $\nn \in A^*$ that 
\begin{equation}
    \omega (\nn) Nil^{\nn} = Nil^{\nn^{>1}} - Nil^{\nn^{<r}} .
\end{equation}

In the following, for a given word $\nn =n_1 \dots n_r$, we denote by $\oo$ the vector of weights $(\omega_1 ,\dots ,\omega_r )$ where $\omega_i =\omega (n_i )$ and by $\mid \oo \mid$ the quantity $\mid \oo\mid  = \omega_1 +\dots +\omega_r$.

\begin{remark}
    In \cite{es} and \cite{ev} and other articles like \cite{ec1,ec2}, J. Ecalle writes moulds not on the alphabet $A$ generated by the vector field but by $\Omega$ which is the set of weight generated by the letter $n\in A$, i.e. 
    \begin{equation}
        \Omega =\{ \omega (n),\ n \in A \} .
    \end{equation}. 
We then denote by $F_{\omega}$, $\omega \in \Omega$, the differential operator
\begin{equation}
    F_{\omega} =\di\sum_{n\in A,\ \omega (n)=\omega} B_n .
\end{equation}
    However, there is no one-to-one correspondance between a letter and a weight. For example, for a two dimensional vector field with a linear part given by $\mathbf{\lambda} =(i,-i )$, $i^2 =-1$, the weight $0$ can be realized by any homogeneous differential operator of order $(m , m)$ where $m\in \N$, $m\geq 1$. As a consequence, working with moulds on $\Omega^*$ induces confusion on the computations and formulas as long as one wants to deal with composition of homogeneous operators. \\

    However, most of the formula proved on $A^*$ for the linearization or prenormalisation of vector fields persist on $\Omega^*$. This is due to the fact that the formula (see \cite{cr2}, Corollaire V.74 p. 369)
    \begin{equation}
        X_{lin} B_{\nn}  =\mid \omega (\nn ) \mid B_{\nn} + B_{\nn} X_{lin} 
    \end{equation}
    is preserved on $\Omega^*$, i.e. denoting by $B_{\oo}$ the differential operator 
    \begin{equation}
        F_{\oo} =F_{\omega_r} \dots F_{\omega _1} , 
    \end{equation}
    for $\oo =\omega_1 \dots \omega _r$, we have 
    \begin{equation}
        X_{lin} F_{\oo} =\mid \oo \mid F_{\oo} + F_{\oo} X_{lin} ,
    \end{equation}
    using the linearity of $X_{lin}$. \\
    
    As a consequence, the mould equations for the conjugacy of vector fields retain their form from $A^*$ to $\Omega^*$ as can be seen from the proof of the (\cite{cr2} Théorème V.72 p.368) given in (\cite{cr2}, p.370).
\end{remark}

It must be noted that for a full resonant word, i.e. a word $\nn =n_1 \dots n_r$ such that $\oo =(0,\dots ,0)$, the previous set of equations does not allow to compute the value of the mould $Nil^{\nn}$. As a consequence, we have to fix the value as initial conditions. \\

This can be done by assuming that the mould $Nil^{\bullet}$ has to be alternal (see \eqref{alternal}) in order that $X_{nil}$ is a vector field. In that case, we must have
\begin{equation}
Nil^{\emptyset} =0 ,
\end{equation}
and
\begin{equation}
    \di\sum_{\nn \in sh (0, 0^r )} Nil^{\nn} =0
\end{equation}
for all $r\geq 1$, $0^r = \underset{r \text{ times}}{\underbrace{0\dots 0}}$. As for any $\nn \in sh (0,0^r )$, we have $\nn=0^{r+1}$, we deduce that
\begin{equation}
    (r+1) M^{0^{r+1}} =0 ,
\end{equation}
which implies 
\begin{equation}
    M^{0^{r+1}} =0 .
\end{equation}

These relations provide a flexible way to compute the mould $Nil^{\bullet}$. We give explicit formula for words of length $1$ to $3$.

\begin{lemma}
Let $n\in A$ then 
\begin{center}
\begin{tabular}{|R{2cm} | L{1.5cm} |}
 \hline   $\nn$ & $Nil^{\bullet}$ \\
 \hline   $\omega (n) \not= 0$ & 0 \\
\hline   $\omega (n)=0$ & 1\\
\hline
\end{tabular}
\end{center}
\end{lemma}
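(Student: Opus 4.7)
The plan is to read both rows of the table directly off Theorem \ref{thm_var_nil}, specialising to a word of length one, and then comparing with the stated initial conditions. The key observation is that for $\nn$ of length one the mould products $I^{\bullet} \times Nil^{\bullet}$ and $Nil^{\bullet} \times I^{\bullet}$ collapse, so that the functional equation reduces to a simple divisibility condition on $Nil^n$.

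Concretely, I would apply the derivation identity
\begin{equation*}
\nabla(Nil^{\bullet}) = I^{\bullet} \times Nil^{\bullet} - Nil^{\bullet} \times I^{\bullet}
\end{equation*}
at $\nn = n$. The left-hand side is $\omega(n)\, Nil^n$. On the right, the only factorisations $n = \aa\bb$ in $A^*$ use the empty word in one of the two slots, so both products evaluate to sums of the form $I^{\emptyset} Nil^n + I^n Nil^{\emptyset}$. Since $Nil^{\emptyset} = 0$ and the two copies cancel symmetrically between $I^{\bullet} \times Nil^{\bullet}$ and $Nil^{\bullet} \times I^{\bullet}$, the right-hand side vanishes. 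Hence $\omega(n)\, Nil^n = 0$.

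Two cases remain. If $\omega(n) \neq 0$, divide to obtain $Nil^n = 0$, which is the first row of the table. If $\omega(n) = 0$, the equation reduces to $0 = 0$ and says nothing about $Nil^n$; but then $n \in Res(A)$ and the initial condition $Nil^n = 1$ from Theorem \ref{thm_var_nil} directly supplies the second row. There is no real obstacle here: the only point of care is the vanishing of the mould products, a one-line bookkeeping check using $Nil^{\emptyset} = 0$ and whatever convention is adopted for $I^{\emptyset}$. The takeaway is that the functional equation is degenerate precisely on resonant letters, which is exactly where the initial data take over.
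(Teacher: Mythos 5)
Your proposal is correct and follows essentially the same route as the paper: the paper's own proof applies its specialization $\omega(\nn)\,Nil^{\nn} = Nil^{\nn^{>1}} - Nil^{\nn^{<r}}$ of the $\nabla$ functional equation to a single letter, obtaining $\omega(n)\,Nil^{n}=0$ from $Nil^{\emptyset}=0$, exactly as you do by expanding $I^{\bullet}\times Nil^{\bullet}-Nil^{\bullet}\times I^{\bullet}$ at length one. Your handling of the resonant case via the initial condition $Nil^{n}=1$ for $n\in Res(A)$ also matches the paper.
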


\begin{proof}
If $n\in A$ is such that $\omega (n)\not =0$, we obtain as $Nil^{\emptyset} =0$ that 
\begin{equation}
    \omega (n) Nil^{n} =0 ,
\end{equation}
and as a consequence, $Nil^n =0$. 
\end{proof}

\begin{lemma}
Let $\nn$ be a word of length $2$. For non-resonant words, i.e. $\mid \oo \mid \not= 0$, we have
\begin{center}
\begin{tabular}{|R{8cm} | L{1.5cm} |}
 \hline   $\nn$ & $Nil^{\bullet}$ \\
 \hline   $\omega_1 \not=0$, $\omega_2 \not= 0$ & $0$ \\
\hline   $\oo =(0,\omega )$, $\omega \not= 0$ & $-\omega^{-1}$\\
\hline    $\oo =(\omega ,0 )$, $\omega \not= 0$ & $\omega^{-1}$\\
\hline
\end{tabular}
\end{center}

For resonant words, i.e. $\mid \oo \mid =0$, we have 

\begin{center}
\begin{tabular}{|R{8cm} | L{1.5cm} |}
 \hline   $\nn$ & $Nil^{\bullet}$ \\
\hline $\oo =(0,0)$ & $0$\\
\hline $\oo =(\omega ,-\omega )$, $\omega \not= 0$ & $-\omega^{-1}$\\
\hline
\end{tabular}
\end{center}
\end{lemma}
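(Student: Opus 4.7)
The plan is to specialize, for $r=2$, the summed variance identity $\omega(\nn)\, Nil^{\nn} = Nil^{\nn^{>1}} - Nil^{\nn^{<r}}$ recalled just above, and combine it with the length-$1$ values from the previous lemma. For a two-letter word $\nn = n_1 n_2$ this reads
\begin{equation}
(\omega_1 + \omega_2)\, Nil^{n_1 n_2} = Nil^{n_2} - Nil^{n_1}.
\end{equation}
In the non-resonant rows $|\oo| \neq 0$, I would simply divide by $\omega_1 + \omega_2$ and insert the length-one values ($Nil^n = 0$ if $\omega(n) \neq 0$ and $Nil^n = 1$ if $\omega(n) = 0$). This recovers at once the three non-resonant entries: both weights nonzero yields $0 - 0 = 0$; a leading zero weight yields $-1/\omega$; and a trailing zero weight yields $1/\omega$.

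In the resonant rows the summed identity degenerates to $0 = 0$ and further input is needed. For $\oo = (0,0)$, I would invoke the initial condition $Nil^{0^{r+1}} = 0$ extracted from alternality just before the lemma, which for $r=1$ forces $Nil^{n_1 n_2} = 0$ through universality. For $\oo = (\omega, -\omega)$ with $\omega \neq 0$, the two letters must be distinct (otherwise $\omega = -\omega$), so equation (\ref{cal1}) applies with $c = n_1$ and gives
\begin{equation}
\omega\, Nil^{n_1 n_2} + Nil^{n_1 + n_2} = Nil^{n_2}.
\end{equation}
Since $n_1 + n_2$ has weight $0$, universality of $Nil^{\bullet}$ together with the length-one lemma yields $Nil^{n_1 + n_2} = 1$; and $Nil^{n_2} = 0$ because $\omega(n_2) = -\omega \neq 0$. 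Solving gives $Nil^{n_1 n_2} = -\omega^{-1}$, as claimed.

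The only real subtlety — and the reason the argument is this short — is that the letter $n_1 + n_2$ appearing on the left-hand side of (\ref{cal1}) need not belong to the original alphabet $A(X)$, and its $Nil$-value has to be read off from the weight $\omega(n_1 + n_2) = 0$ via universality of the mould. Once this is granted, each row of the two tables is settled by a single linear equation in one unknown, and the fully resonant case $\oo = (0,0)$ is the only one that strictly requires alternality as a closure condition.
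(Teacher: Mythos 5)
Your proposal is correct and follows essentially the same route as the paper: the individual variance relations \eqref{cal1}--\eqref{calr} (equivalently their sum, the $\nabla$-identity $\omega(\nn)Nil^{\nn}=Nil^{\nn^{>1}}-Nil^{\nn^{<r}}$ stated just before the lemma), combined with the length-one values, universality to evaluate $Nil^{n_1+n_2}$ from its weight, and alternality for the fully resonant word $(0,0)$. Your uniform use of the summed identity even covers the first non-resonant row ($\omega_1,\omega_2\neq 0$) explicitly, which the paper's proof leaves implicit, and you correctly isolate $\oo=(\omega,-\omega)$ as the case where that identity degenerates and \eqref{cal1} must be used directly.
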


\begin{proof}
Let $\nn = n_1 n_2$ be a non-resonant word with $\mid \oo \mid =\omega \not= 0$. Then if $n_1\in Res (A)$, i.e. $\omega_1 =0$ then $n_2 \in A$ satisfies $\omega_2 =\omega$ and we obtain using  \eqref{calr} we have 
\begin{equation}
\omega  Nil^{\nn} -Nil^{n_1+n_2} = -Nil^{n_1} ,
\end{equation}
which gives
\begin{equation}
    Nil^{\nn} =\omega^{-1} \left ( -Nil^{n_1} +Nil^{n_1+n_2} \right ) .
\end{equation}
As $\omega_1 =0$ and $\omega (n_1 +n_2 )=\omega$, we deduce that $Nil^{n_1} =1$ and $Nil^{n_1 +n_2} =0$. As a consequence, we obtain
\begin{equation}
    Nil^{\nn} =-\omega^{-1} .
\end{equation}
In the same way, when $\oo =(\omega ,0)$, with $\omega \not= 0$ using \eqref{cal1}, we obtain
\begin{equation}
    \omega  Nil^{\nn} +Nil^{n_1 +n_2} = Nil^{n_2} .
\end{equation}
As $Nil^{n_2} =1$ and $\omega (n_1 +n_2)=\omega \not= 0$, we have $Nil^{n_1+n_2} =0$ and 
\begin{equation}
    Nil^{\nn} = \omega^{-1} .
\end{equation}

For resonant words, we have two cases. If $\oo = (0,0 )$ then $Nil^{\nn} =0$ by assumption. Otherwise, we have $\oo =(\omega , -\omega )$ with $\omega \not= 0$. Using \eqref{cal1} we obtain
\begin{equation}
    \omega  Nil^{\nn} +Nil^{n_1 +n_2} = Nil^{n_2} .
\end{equation}
As $\omega (n_1 +n_2 ) =0$ and $\omega (n_2)\not=0$, we have $Nil^{n_1+n_2} =1$ and $Nil^{n_2} =0$ so that 
\begin{equation}
    Nil^{\nn} =-\omega^{-1} .
\end{equation}
\end{proof}

\begin{lemma}
Let $\nn$ be a word of length $3$. For non resonant word, i.e. $\mid \oo \mid \not= 0$, we have: 

\begin{center}
\begin{tabular}{|C{8cm} | L{2.5cm} |}
 \hline  $\nn$ & $Nil^{\bullet}$ \\
 \hline  $\oo =(\omega_1 ,\omega_2 ,\omega_3 )$, $\omega_1 ,\omega_2 ,\omega_3 \not=0$, $\omega_1 +\omega_2 \not= 0$, $\omega_1 +\omega_3 \not=0$, $\omega_2 +\omega_3 \not= 0$ & $0$ \\
\hline  $\oo = (\omega , \tilde{\omega} , -\tilde{\omega} )$, $\omega\not= 0$, $\tilde{\omega}\not= 0$, $ \o + \tilde{\o} \neq 0$ & $-\omega^{-1} \tilde{\omega}^{-1}$ \\
\hline  $\oo = (\tilde{\omega} , \omega ,-\tilde{\omega} )$, $\omega\not= 0$, $\tilde{\omega}\not= 0$, $ \o + \tilde{\o} \neq 0$ & $0$ \\
\hline  $\oo = (\omega , -\omega,  \tilde{\omega}  )$, $\omega\not= 0$, $\tilde{\omega}\not= 0$, $ \o + \tilde{\o} \neq 0$ & $\omega^{-1} \tilde{\omega}^{-1}$ \\
\hline   $\oo =(0,\omega ,\tilde{\omega} )$, $\omega+\tilde{\omega} \not= 0$, $\omega\not= 0$, $\tilde{\omega}\not= 0$ & $\omega^{-1}(\omega+\tilde{\o})^{-1}$\\
\hline    $\oo =(\omega ,0,\tilde{\omega} ) $, $\omega+\tilde{\omega} \not= 0$, $\omega\not= 0$, $\tilde{\omega}\not= 0$ & $-\omega^{-1}\tilde{\o}^{-1}$\\
\hline $\oo =(\omega ,\tilde{\omega} ,0) $, $\omega+\tilde{\omega} \not= 0$, $\omega\not= 0$, $\tilde{\omega}\not= 0$ & $\tilde{\omega}^{-1}(\o+\tilde{\o})^{-1}$\\
\hline  $\oo =(\omega ,0,0)$, $\omega\not=0$ & $\omega^{-2}$\\
\hline  $\oo =(0,\omega ,0)$, $\omega\not=0$ & $2\omega^{-2}$\\
\hline  $\oo =(0,0 ,\omega )$, $\omega\not=0$ & $-\omega^{-2}$\\
\hline
\end{tabular}
\end{center}

For resonant words $\nn$, i.e. $\mid \oo \mid =0$, we have:

\begin{center}
\begin{tabular}{|R{9cm} | L{1.5cm} |}
 \hline   $\nn$ & $Nil^{\bullet}$ \\
 \hline   $\oo = (0 ,0 ,0)$ & $0$ \\
 \hline   $\oo = (0 ,\omega ,-\omega )$ & $-\omega^{-2}$ \\
\hline   $\oo = (\omega  ,0 ,-\omega )$ & $2\omega^{-2}$ \\
\hline   $\oo = (\omega  ,-\omega  ,0)$ & $-\omega^{-2}$\\
\hline
\end{tabular}
\end{center}
\end{lemma}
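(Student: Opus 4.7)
The plan is to reduce every length-$3$ value $Nil^{\nn}$ to already-established length-$1$ and length-$2$ values using two tools: the local variance identities \eqref{cal1}, \eqref{calr}, \eqref{cali} (each of which requires that the pivot letter appear only once in $\nn$), and their sum, the global weight identity $\omega(\nn)\, Nil^{\nn} = Nil^{\nn^{>1}} - Nil^{\nn^{<r}}$.

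For non-resonant words ($\mid \oo \mid \neq 0$) the global identity is immediate: both $Nil^{n_2 n_3}$ and $Nil^{n_1 n_2}$ are read off the length-$2$ lemma---each suffix or prefix of length $2$ falls into exactly one row of that table under the convention that distinct symbols in $\oo$ denote distinct weights---and a single division by $\mid \oo \mid$ produces the tabulated entry. For the resonant words ($\mid \oo \mid = 0$) the global identity degenerates into the consistency check $Nil^{n_2 n_3} = Nil^{n_1 n_2}$, so one must extract the value from a local identity instead. The recipe is to pivot at a position whose letter is not repeated and whose weight is nonzero: for $\oo = (0,\omega,-\omega)$ I would use \eqref{calr} at position $3$, for $\oo = (\omega,-\omega,0)$ I would use \eqref{cal1} at position $1$, and for $\oo = (\omega,0,-\omega)$ either endpoint identity. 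In each case the resulting equation writes $\pm\omega\, Nil^{\nn}$ as a difference of a length-$2$ value and a contracted length-$2$ value, both tabulated in the previous lemma. The remaining fully resonant entry $\oo = (0,0,0)$ is not reachable by any local identity and is pinned instead by the alternality initial condition $Nil^{0^r} = 0$ established just before the length-$1$ lemma.

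The main obstacle I anticipate is bookkeeping rather than substance: one must verify, case by case, that the chosen pivot letter is unique within $\nn$ and that the contracted word falls into a previously classified subcase---so the delicate entries are those whose weight vector contains a repeated $0$, forcing one to choose between a local identity and the fully resonant initial condition. A useful internal check is the overdetermination noted by Ecalle and Vallet: whenever several pivots are available, all must produce the same value, and this redundancy can be exploited to cross-verify the tabulated entries.
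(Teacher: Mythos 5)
Your plan coincides with the paper's own proof: for non-resonant words the paper applies exactly the global identity $\omega (\nn ) Nil^{\nn} = Nil^{n_2 n_3} - Nil^{n_1 n_2}$ and reads the two length-$2$ values off the previous lemma, while for resonant words it pivots a single local variance identity ($Var_1$ or $Var_2$) at a non-repeated letter of nonzero weight, with $\oo =(0,0,0)$ settled by the alternality initial condition. The case split, the distinctness convention for the symbols in $\oo$, and the overdetermination cross-check you mention are all the ingredients used there, so the proposal is correct and essentially identical in route.
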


\begin{proof}
We decompose the study by looking for non-resonant words and resonant words.
Firstly, we consider \textbf{non-resonant words}. Let $\nn=n_1n_2n_3$ be such that $\o(\nn)\neq 0$ then we use the formula : 
$$\o(\nn)Nil^\nn = Nil^{n_2n_3} - Nil^{n_1n_2}.$$
Dealing with this decomposing in two words $n_2n_3$ and $n_1n_2$, notice that $n_1$ and $n_3$ do not interact with each other then we consider :\\
\textbf{Case 1} : If $\omega (n_2 n_3)=0$ then $\omega (n_1 )\not =0$ and two sub-cases have to be considered.
        \begin{enumerate}
            \item If $\omega (n_2 ) =0$ then $\omega (n_3 )=0$ and $Nil^{n_2 n_3} =0$ and $Nil^{n_1 n_2 } = -\omega_1^{-1}$. We obtain for $\nn$ with a weight $\oo = (\omega_1  , 0 ,0)$ and $\omega_1 \not= 0$ that $Nil^{n_1 n_2 n_3} =\omega_1^{-2}$.

            \item If $\omega (n_2)\not= 0$ then $\omega (n_3 ) =-\omega (n_2)$ and $\o(n_1) \neq 0$ and we have two subcases :
            \begin{enumerate}
                \item If $\omega (n_1 n_2) \not= 0$ then $Nil^{n_2 n_3} = -\omega_2^{-1}$ and $Nil^{n_1 n_2} =0$. We obtain for $\nn$ with a weight $\oo =(\omega_1  ,  \omega_2 , -\omega_2 )$ and $\mid \oo \mid \not= 0$ that $Nil^{\nn} =-\omega_1^{-1} \omega_2^{-1}$.
                \item If $\omega (n_1 n_2) = 0$, then $Nil^{n_2n_3}= -\o_2^{-1}$ and $Nil^{n_1n_2}= \o_2^{-1}$ and we obtain : $\o(\nn)Nil^\nn = -2(\o_2)^{-2} $.
            \end{enumerate}
        \end{enumerate}

 \textbf{Case 2} : If $\omega (n_2 n_3 )\not =0$ then two sub-cases have to be considered. 
    \begin{enumerate}
        \item If $\omega (n_1)=0$ we have three cases. 
        \begin{enumerate}
            \item If $\omega (n_2 )=0$ and $\o(n_3) \neq 0$ then $\omega (n_3)\not= 0$ and we have $Nil^{n_2 n_3} = -\omega_3^{-1}$ and $Nil^{n_1 n_2} = 0$ so that for a word $\nn$ with a weight $\oo = (0 ,0 ,\omega_3 )$ with $\omega_3\not= 0$, we have $Nil^{\nn} = -\omega_3^{-2}$. 

            \item If $\omega (n_2)\not= 0$ and $\o(n_3)= 0$, then we have $Nil^{n_1n_2}=-\o_2^{-1}$ and $Nil^{n_2n_3}=\o_2^{-1}$. We obtain $Nil^\nn=-2\o_2^{-2}$ because $\o(\nn)=\o_2.$

            \item If $\omega (n_2)\not= 0$ and $\omega (n_3)\not= 0$, then we have $Nil^{n_1n_2}=-\o_2^{-1}$ and $Nil^{n_2n_3}=0$. We obtain $Nil^\nn=-\o_2^{-1}(\o_2+\o_3)^{-1}$. 
        \end{enumerate}
        \item If $\omega (n_1)\neq 0$, then we have the four sub-cases : 
        \begin{enumerate}
            \item If $\o_1+\o_2= 0$  then we must have $\o_3\neq 0$ ($\o_3=0$ is a resonant case) and $ Nil^\nn = -\o_1^{-1}\o_3^{-1}.$

            \item  If $\o_1+\o_2\neq 0$ such that $\o_2 \neq 0$  and $\o_3=0$ then we obtain $Nil^{n_1n_2}=Nil^{n_2n_3}=0$ and $Nil^\nn=0.$

            \item If $\o_1+\o_2\neq 0$ such that $\o_2 = 0$  and $\o_3\neq 0$ then we obtain $Nil^{n_2n_3}=-\o_3^{-1}$ and $Nil^{n_1n_2}=\o_1^{-1}$. We obtain $Nil^\nn=-2\o_1^{-1}\o_3^{-1}.$
            
            \item If $\o_1+\o_2\neq 0$  and $\o_1+\o_3\neq 0$ then we have $Nil^{n_1n_2}=0$ and $Nil^{n_2n_3}=0$ and $Nil^\nn=0.$
        \end{enumerate} 
        
    \end{enumerate}

Now, we consider $\nn$ \textbf{resonant}, that is $\mid \omega (\nn )\mid =0$ we can organize the computations with respect to the number of resonant letters using the formula of the variance applied to the mould $Nil^\bullet$ (see Theorem 4). \\
If we have two resonant letters, due to $\mid \oo \mid =0$ the three letter are resonant and $Nil^{\nn} =0$ by assumption.\\
If we have one resonant letter, the two remaining ones have opposite weights. We have three cases:
\begin{itemize}
    \item If $\oo =(0,\omega ,-\omega )$ then $Nil^{(n_1 +n_2) n_3} =-\omega^{-1}$ and $Nil^{n_1 (n_2+n_3)} = 0$ so that $Nil^{\nn} = -\omega^{-2}$ using $Var_2$. 

    \item If $\oo =(\omega ,0,-\omega )$ then $Nil^{(n_1 +n_2 ) n_3 } = -\omega^{-1}$ and $Nil^{n_2 n_3} = \omega^{-1}$. We obtain $Nil^{\nn} = 2\omega^{-2}$ using $Var_1$.

    \item If $\oo =(\omega , -\omega ,0 )$ then $Nil^{(n_1 +n_2 ) n_3} = 0$ and $ Nil^{n_1 (n_2 +n_3 )} = -\omega^{-1}$. We obtain $Nil^{\nn} = -\omega^{-2}$ using $Var_1$. 
\end{itemize}
This completes the proof.

\end{proof}

\section{Conclusion and perspectives}
\label{conclusion}

In this article, we have given the definition and first properties of the variance of a vector field following as closely as possible the previous work of J. Ecalle and D. Schlomiuk \cite{es} where this notion was introduced but not named as variance and J. Ecalle and B. Vallet \cite{ev} where the notion is formalized and fully used for the analysis of a new object called the {\it correction} of a vector field. \\

However, several results stated in these two articles have to be discussed and supported by complete proofs. In particular, the two following problems have to be analyzed:\\

\begin{itemize}
\item In (\cite{es},p. 1432) the analytic properties of the mould $Nil^{\bullet}$ are studied. In particular, it is proved that the form of the mould is preserved under arborification. We refer to \cite{cmp} for an introduction to arborification of functional equations on moulds and a complete proof of this result. \\

\item The phenomenon of "non-appearance of multiple small denominators" exhibit in \cite{ev} in order to prove the analyticity  of the correction and the analytical linearizability of the corrected form is based on a careful study of the {\it variance rules} obtained for the mould of the correction (see \cite{ev},p.290) and their behavior under arborification. The study of the mould of the correction as well as its properties under arborification will be the subject of a forthcoming article.
\end{itemize}

\end{document}